\newtheorem{theorem}{Theorem}[section]
\newtheorem{lemma}[theorem]{Lemma}
\newtheorem{corollary}[theorem]{Corollary}
\def\d{\delta}
\def\e{\varepsilon}
\def\l{\lambda}
\def\m{{\mu}}
\def\O{{\Omega}}
\def\cK{{\mathcal K}}
\def\cF{{\mathcal F}}
\def\bN{{\mathbf N}}
\def\C{{\mathds C}}
\def\E{{\mathds E}}
\def\N{{\mathds N}}
\def\P{{\mathds P}}
\def\R{{\mathds R}}
\def\V{{\mathds V}{\rm ar\,} }
\def\1{{\mathds 1}}
\def\dint{\textup{d}}
\begin{document}

\title[Identities for Poisson polytopes] {Beyond the Efron-Buchta identities: \\ distributional results for Poisson polytopes}

\author{Mareen Beermann, Matthias Reitzner}
\address{University of Osnabrueck, Department of Mathematics, Albrechtstr. 28a, 49076 Osnabrueck, Germany}
\email{mareen.beermann[at]uni-osnabrueck.de, matthias.reitzner[at]uni-osnabrueck.de}

\begin{abstract}
Let $\Pi$ be a random polytope defined as the convex hull of the points of a Poisson point process. Identities involving the moment generating function of the measure of $\Pi$, the number of vertices of $\Pi$ and the number of non-vertices of $\Pi$ are proven. Equivalently, identities for higher moments of the mentioned random variables are given.

This generalizes analogous identities for functionals of convex hulls of i.i.d points by Efron and Buchta. 
\keywords{Poisson polytope \and random polytope \and generating function \and Efron's identity}
%\subclass {Primary 60D05 \and Secondary 60G55 \and 52A22}
\end{abstract}

\maketitle

\section{Introduction and main results}

Let $\mu$ be some probability measure in $\R^d$ which is absolutely continuous with respect to Lebesgue measure. Choose $m$ random points $X_1, \dots, X_m$ in $\R^d$ independently according to the probability measure $\mu$. We call the convex hull $P_m=[X_1, \dots, X_m] $ of these points a random polytope. Numerous papers have been designated to the study of combinatorial and metric properties of such random polytopes, investigating e.g. the number of facets and the volume.

The problem to determine the expectation $\E N(P_m)$ of the number of vertices of such a random polytope in dimension $d=2$ was first raised by Sylvester precisely 150 years ago in 1864 and so became known as Sylvester's problem. He suggested to choose the points according to Lebesgue measure $\l_2$, naturally restricted to some convex set $K$ of finite area. In the following years a large number of explicit results have been obtained. Most of them concerned the expected area $\E \l_2(P_m)$ of random polygons, where the random points are chosen uniformly in special convex bodies $K$ such as the ellipse  or polygons (see e.g. Buchta \cite{Bu2}, \cite{Bu3}, Buchta and Reitzner \cite{BR2}). Yet, for $d \geq 3$ it appeared to be difficult to evaluate the expected volume for convex bodies different from the unit ball (see Buchta and M\"uller \cite{BM}, Kingman \cite{Ki}, Affentranger \cite{Af1}, Buchta and Reitzner \cite{BR4},  and Zinani \cite{Zi}). Thus, recent developments concentrate on asymptotic results 
as $m \to \infty$.

The question how to link Sylvester's original question asking for the expected number of vertices $\E N(P_m)$ to the expected area, respectively volume $\E \l_d (P_m)$ of the random polytope was answered by Efron \cite{Ef}, who proved for $d=2,3$
$$ \frac{\E \l_d (P_{m})}{\l_d(K)}=1-\frac{\E N(P_{m+1})}{m+1}.
$$
More generally, one can replace Lebesgue measure by some arbitrary probability measure $\mu$ here and obtains
$$\E \mu (P_{m})=1-\frac{\E N(P_{m+1})}{m+1}
$$
for $m$ random points chosen independently according to the probability measure $\mu$. 

For a long time Efron's result -- although frequently used -- stood somehow isolated in the theory of random polytopes. 
Only recently, Buchta \cite{Bu11} was able to complement this equation by identities for higher moments. He proved for $k\in \N$
\begin{equation}\label{eq:Buid}
\E \mu (P_{m})^{k}=\E \prod\limits_{i=1}^{k} \left(1-\frac{N(P_{m+k})}{m+i}\right) . 
\end{equation}
For the first time not only expectations but higher moments of $\l_d (P_m)$, respectively $\mu(P_m)$ were linked to moments of $N(P_m)$. For example, Buchta's identities give rise to an identity for the variances of $\mu(P_m)$ and $N(P_m)$ thus correcting an error in previous results for the variances of these random variables, see \cite{Bu11}.

It is desirable to go a step further by linking the generating functions of $\mu(P_m )$ and $N(P_m)$ and thus the distributions. 
But, to the best of our knowledge, Buchta's identity is still too complicated to lead to a simple identity between the generating functions of $\mu(P_n)$ and $N(P_n)$. 

Yet switching from the binomial model described above to the Poisson model leads to surprisingly simple identities. It is the aim of this paper to state analog's of Buchta's identities in the Poisson model, and then to link the generating functions of $\mu(\cdot)$ and $N(\cdot)$ by an extremely simple identity.

To describe the Poisson model we assume that the number of random points itself is a Poisson distributed random variable $M$ with parameter $t>0$. Then the points $X_1, \dots , X_M$ form a Poisson point process $\eta$ in $\R^d$ of intensity measure $t \mu$. We denote by $\Pi_{t}$ the convex hull of the points of $\eta$. Our main result concerns the number of inner points $I(\Pi_{t}) = M - N(\Pi_t)$ using the (probability-) generating function $g_{I(\Pi_{t})}$ and the moment generating function $h_{\mu(\Pi_{t})}$ of $\mu(\Pi_t)$.
\begin{theorem}\label{th:charfct}
The generating function $g_{I(\Pi_{t})}$ of the number of inner points and the moment generating function $h_{\mu(\Pi_{t})}$ of the $\mu$-measure of $\Pi_t$ are entire functions on $\C$ and satisfy
$$ g_{I(\Pi_{t})}(z+1)=h_{\mu(\Pi_{t})}(tz).
$$
\end{theorem}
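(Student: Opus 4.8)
The plan is to reduce the functional identity to an identity between the Taylor coefficients of its two sides, and to prove that coefficient identity --- a Poisson analogue of Buchta's identity \eqref{eq:Buid} --- by a single application of the multivariate Mecke formula. First I would record that both functions are entire. Since $0\le I(\Pi_t)\le M$, the coefficients satisfy $\P(I(\Pi_t)=n)\le\P(M=n)=e^{-t}t^n/n!$, so the series $g_{I(\Pi_t)}(w)=\sum_{n}\P(I(\Pi_t)=n)\,w^n$ converges for every $w\in\C$; and since $0\le\mu(\Pi_t)\le1$ we have $|h_{\mu(\Pi_t)}(w)|\le e^{|w|}$, so $h_{\mu(\Pi_t)}$ is entire too. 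Expanding both sides as power series in $z$ gives
\[
g_{I(\Pi_t)}(z+1)=\sum_{k\ge 0}\E\binom{I(\Pi_t)}{k}\,z^k,\qquad h_{\mu(\Pi_t)}(tz)=\sum_{k\ge 0}\frac{t^k}{k!}\,\E\,\mu(\Pi_t)^k\,z^k,
\]
the first rearrangement being justified by absolute convergence. Comparing coefficients, the theorem becomes equivalent to the factorial-moment identity
\[
\E\big[I(\Pi_t)\,(I(\Pi_t)-1)\cdots(I(\Pi_t)-k+1)\big]=t^k\,\E\,\mu(\Pi_t)^k,\qquad k\in\N.
\]

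Next I would interpret the left-hand side probabilistically. The $k$-th descending factorial moment of $I(\Pi_t)$ is the expected number of ordered $k$-tuples of distinct inner points of $\eta$, namely $\E\sum_{(x_1,\dots,x_k)}^{\neq}\prod_{i=1}^k \1[x_i\in \mathrm{conv}(\eta\setminus\{x_i\})]$, the sum running over pairwise distinct points of $\eta$. Applying the multivariate Mecke formula to the intensity measure $t\mu$ converts this into
\[
t^k\int_{(\R^d)^k}\P\Big(\textstyle\bigcap_{i=1}^k\big\{x_i\in\mathrm{conv}\big((\eta\cup\{x_1,\dots,x_k\})\setminus\{x_i\}\big)\big\}\Big)\,\mu(\dint x_1)\cdots\mu(\dint x_k).
\]

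The crux is a deterministic geometric lemma: for pairwise distinct points $x_1,\dots,x_k$ and a finite point set $\eta$, each $x_i$ is a non-vertex of $\mathrm{conv}(\eta\cup\{x_1,\dots,x_k\})$ if and only if all of $x_1,\dots,x_k$ lie in $\Pi_t=\mathrm{conv}(\eta)$. One implication is trivial. For the other, suppose some $x_i\notin\mathrm{conv}(\eta)$; then $\mathrm{conv}(\eta\cup\{x_1,\dots,x_k\})$ strictly contains $\mathrm{conv}(\eta)$, hence has a vertex lying outside $\mathrm{conv}(\eta)$, and this vertex must be one of the $x_j$. But a vertex of a finitely generated polytope is never contained in the convex hull of the remaining generators, contradicting the assumption that $x_j$ is a non-vertex. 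Feeding the lemma back, the integrand collapses to $\prod_{i=1}^k\1[x_i\in\Pi_t]$, which no longer involves the inserted points, so by Tonelli the integral factorizes into $\E\prod_{i=1}^k\int_{\R^d}\1[x_i\in\Pi_t]\,\mu(\dint x_i)=\E\,\mu(\Pi_t)^k$, yielding exactly $t^k\,\E\,\mu(\Pi_t)^k$.

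I expect the geometric equivalence to be the main obstacle to state and prove cleanly, since it is precisely what decouples the $x_i$: a priori each $x_i$ could exploit the other inserted points to sit inside the enlarged hull, and the lemma is what reduces this mutual dependence to the single, point-by-point condition $x_i\in\Pi_t$. The remaining steps are routine bookkeeping: checking the integrability hypotheses of the Mecke formula, the interchange of summation and expectation, and observing that degenerate configurations (fewer than $d+1$ points, or affinely dependent tuples) occur with probability zero because $\mu$ is absolutely continuous, so they affect none of the integrals.
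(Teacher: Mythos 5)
Your proposal is correct and follows essentially the same route as the paper: both reduce the functional identity to the factorial-moment identity $\E I(\Pi_t)_{(k)} = t^k\,\E\,\mu(\Pi_t)^k$ (the paper's Theorem~\ref{th:ident}) via power-series expansion of the two entire functions, and both prove that identity by the Slivnyak--Mecke formula combined with the geometric observation that if none of the inserted points is a vertex (equivalently, all lie in the interior) of the enlarged hull, then the hull is unchanged, which decouples the integrand. The only difference is presentational: the paper phrases the decoupling in terms of interior points and passes over it without proof, whereas you state it as an explicit lemma for non-vertices (almost surely equivalent, since $\mu$ is absolutely continuous) and supply the short argument.
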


This theorem is a consequence of an identity between the moments of $I(\Pi_t)$ and $\mu(\Pi_t)$ and leads to an identity between the cumulants of $I(\Pi_t)$ and $\mu(\Pi_t)$. It is accompanied by a theorem connecting the generating function of the number of vertices to the moment generating function of the $\mu$-measure of $\R^d \backslash \Pi_t$.

The paper is organized in the following way. We provide the  necessary background information and notations in Section \ref{sec:notations}.  Section \ref{sec:resultsI} contains the proof of Theorem~\ref{th:charfct}, the identity concerning the number of inner points of the random polytope and an identity for  the cumulants. The identities relating the generating functions and the moments of the number of vertices of the random polytope are discussed in Section \ref{sec:resultsN}. In Section \ref{sec:appl} some applications to random polytopes in smooth convex bodies are given. 
For the applications we need a lemma, which is also of independent interest. It  is contained in the  Appendix \ref{Appendix}.

For further material on random polytopes we refer to the recent survey articles by Hug \cite{Hugsurv} and Reitzner \cite{Reitsurv}.

\section{Background and Notations}\label{sec:notations}
Let $\mu$ be a probability measure which is absolutely continuous with respect to Lebesgue measure. Assume that $\eta$ is a Poisson point process with intensity measure $t\mu$, $t >0$. The most important examples are given if $\mu$ is either the suitably normalized Lebesgue measure on some convex set $K \subset \R^d$ or the $d$-dimensional Gaussian measure.

More precisely, by $\bN$ we denote the set of all simple and finite counting measures $\nu= \sum \d_{x_i}$ with $x_i \in \R^d$, where simplicity of a counting measure $\nu= \sum \d_{x_i}$ means that $x_i\neq x_j$ for all $i\neq j$. Alternatively, one can think of $\bN$ as the set of all finite point configurations of distinct points in $\R^d$. This can be achieved by identifying the random measure $\nu$ with its support $\{ x_1, x_2 , \dots \}$. Consequently, for $\nu \in \bN$ and a Borel set $A \subset \R^d$, $\nu(A)$ denotes both, the restricted point configuration  $\{ x_1, x_2, \dots \} \cap A$ and the counting measure $\sum \d_{x_i} (A)$. 

Let $(\Omega,\mathcal{F},\P)$ be a probability space. A random measure $\eta: \Omega\to \bN$ is a Poisson point process with intensity measure $t\mu$ if for any Borel set $A$ the random variable $\eta(A)$ is Poisson distributed with parameter $t\mu(A)= \E \eta(A)$, and the random variables $\eta(A_1),\dots,\eta(A_m)$ are independent for pairwise disjoint Borel sets $A_1,\dots,A_m$.

\medskip
By $\Pi_{t}$ we denote the convex hull of the points of $\eta$, which is a random polytope.  $\Pi_{t}^{o}$ will stand for the interior of the random polytope. We will use $N(\Pi_{t})$ for the number of vertices and $I(\Pi_{t})$ for the number of inner points of $\Pi_{t}$, where it holds with probability one that
$$N(\Pi_{t}) = \sum_\eta \1(x \notin \Pi_{t}^o) $$ 
and
$$ I(\Pi_{t})=\eta({\R^d} )-N(\Pi_{t})  = \sum_\eta \1(x \in \Pi_{t}^o).$$
Let us write 
$$\Delta (\Pi_t) = \mu(\R^d \backslash \Pi_t)= 1 -  \mu(\Pi_t) $$
for the $\mu$-content of the complement of $\Pi_{t}$.

\medskip
We will make statements about the (probability-)generating function
$$  g_{X}(z) = \E z^{X} $$
and the moment generating function
$$h_{X}(z) = \E e^{zX} $$
of a random variable $X$ and $z \in \C$.
We set $n_{(k)} = \frac{n!}{(n-k)!}, \; n, k \in \N$.

\medskip
We make use of the Slivnyak-Mecke formula \cite[p.68]{SchnWe3}. In our setting it says that for $m\in \N$ and $f:\bN \times (\R^d)^{m}\rightarrow \R$ a nonnegative measurable function it holds 
\begin{eqnarray}\label{eq:SMF}
\E \lefteqn{\sum_{(x_{1},...,x_{m})\in \eta_{\neq}^{m}}  f(\eta;x_1,...,x_m)}&&
\\ \nonumber &=&
t^m \int\limits_{\R^d}\dots \int\limits_{\R^d} \E f(\eta+\sum_{i=1}^{m}\delta_{x_{i}};x_{1},...,x_{m})
\mu(\dint x_{1})\dots\mu(\dint x_{m}).
\end{eqnarray}
Here $\eta_{\neq}^{m}$ stands for the set of all $m$-tuples of distinct points in $\eta$.

\medskip
Furthermore, we need a relative to the inclusion exclusion principle. Assume $A\subset\R^{d}$, $k\in\N$ and assume $x_{1},...,x_{k}\in\R^{d}$ to be fixed distinct points. Then
\begin{equation}\label{eq:siebf}
 \1(\bigcup_{j=1}^k \{x_j\} \cap A \neq \emptyset) = 
\sum_{r=1}^k (-1)^{r+1} \sum_{I\in \{1, \dots ,k\}^r_{\neq} } 
\1(\bigcup_{j \in I} \{x_j\} \subset A ).
\end{equation}
Here again $\{1, \dots ,k\}^r_{\neq}$ stands for the set of all $r$-tuples of distinct numbers in $\{1, \dots, k\}$.
This formula is just the binomial formula, applied to $(1 - 1)^m$, where $m$ is the cardinality of
$\bigcup_{j=1}^k \{x_j\} \cap A $.

\section{Results for the number of inner points}\label{sec:resultsI}

The aim of this section is to obtain relations between the factorial moments of the number of inner points $I(\Pi_{t})$ and the moments of the $\mu$-content of the random polytope $\Pi_{t}$. From this statement we will deduce Theorem~\ref{th:charfct}.

\begin{theorem}\label{th:ident}
Let $I(\Pi_{t})$ be the number of inner points and $\mu(\Pi_{t})$ the $\mu$-content of the random polytope $\Pi_{t}$. Then for $k \in \N$
$$ \E I(\Pi_{t})_{(k)} = t^k\E \mu(\Pi_{t})^k . 
$$
\end{theorem}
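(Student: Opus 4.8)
The plan is to express the falling factorial of $I(\Pi_{t})$ as a sum over ordered tuples of distinct points of $\eta$, apply the Slivnyak--Mecke formula \eqref{eq:SMF}, and then exploit that interior points of a convex hull are irrelevant for the hull itself. Throughout I write $[\,\nu\,]$ for the convex hull of the points of a configuration $\nu$, so that $\Pi_t = [\,\eta\,]$.

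First I would use that $I(\Pi_{t}) = \sum_{x\in\eta} \1(x\in\Pi_t^o)$ is a sum of $\{0,1\}$-valued indicators, so its $k$-th factorial power counts ordered $k$-tuples of distinct interior points,
$$ I(\Pi_{t})_{(k)} = \sum_{(x_1,\dots,x_k)\in\eta_{\neq}^{k}} \prod_{i=1}^{k} \1(x_i\in\Pi_t^o). $$
Taking expectations and applying \eqref{eq:SMF} to $f(\eta;x_1,\dots,x_k)=\prod_{i=1}^{k}\1\bigl(x_i\in[\,\eta\,]^o\bigr)$ turns this into
$$ \E I(\Pi_{t})_{(k)} = t^k \int_{\R^d}\!\!\cdots\!\int_{\R^d} \E \prod_{i=1}^{k} \1\Bigl(x_i\in\bigl[\,\eta+\textstyle\sum_{j=1}^{k}\delta_{x_j}\,\bigr]^o\Bigr)\, \mu(\dint x_1)\cdots\mu(\dint x_k). $$

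The hard part will be the purely geometric identity
$$ \prod_{i=1}^{k} \1\Bigl(x_i\in\bigl[\,\eta+\textstyle\sum_{j}\delta_{x_j}\,\bigr]^o\Bigr) = \prod_{i=1}^{k} \1\bigl(x_i\in[\,\eta\,]^o\bigr), $$
which I claim holds for every realization of $\eta$ and all distinct $x_1,\dots,x_k$. To prove it I would use that a point of a finite generating set lying in the interior of the convex hull is never an extreme point, so deleting it from the list of generators leaves the hull unchanged. Hence if all $x_i$ lie in $[\,\eta+\sum_j\delta_{x_j}\,]^o$, I may remove them one at a time without altering the hull, which forces $[\,\eta+\sum_j\delta_{x_j}\,]=[\,\eta\,]$ and therefore $x_i\in[\,\eta\,]^o$ for each $i$; the converse is immediate, since adding points already interior to $[\,\eta\,]$ does not enlarge the hull. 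This equivalence makes the two products agree.

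Finally, after substituting this identity the hull $[\,\eta\,]=\Pi_t$ no longer depends on $x_1,\dots,x_k$, so Fubini's theorem (all integrands are nonnegative) factorizes the integral:
$$ \E I(\Pi_{t})_{(k)} = t^k\, \E \prod_{i=1}^{k} \int_{\R^d}\1(x_i\in\Pi_t^o)\,\mu(\dint x_i) = t^k\, \E\,\mu(\Pi_t^o)^k. $$
Since $\mu$ is absolutely continuous with respect to Lebesgue measure and the boundary of the polytope $\Pi_t$ is Lebesgue-null, one has $\mu(\Pi_t^o)=\mu(\Pi_t)$, and I obtain $\E I(\Pi_{t})_{(k)} = t^k\,\E\,\mu(\Pi_{t})^k$, as asserted.
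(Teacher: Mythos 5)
Your proposal is correct and follows essentially the same route as the paper: expanding $I(\Pi_t)_{(k)}$ as a sum over ordered $k$-tuples of distinct interior points, applying the Slivnyak--Mecke formula, replacing the augmented hull $[\eta,x_1,\dots,x_k]$ by $[\eta]$, and invoking absolute continuity of $\mu$ to pass from $\mu(\Pi_t^o)$ to $\mu(\Pi_t)$. The only difference is that you spell out the geometric identity (interior points are non-extreme, hence removable one at a time) which the paper uses silently, and that argument is correct.
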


We make this explicit in the particular cases $k=1,2$.
For $k=1$, Theorem~\ref{th:ident} yields for the expectations of these random variables
\begin{equation}\label{eq:idE}
 \E I(\Pi_{t}) = t\E \mu(\Pi_{t}) .
\end{equation}
For $k=2$ we obtain an identity for the variances,
\begin{equation}\label{eq:idVar}
\V I(\Pi_{t}) = t^2\V \mu(\Pi_{t}) + t \E \mu(\Pi_{t}).
\end{equation}

\begin{proof}[of Theorem~\ref{th:ident}]
Consider the number of inner points $I(\Pi_{t})$,
$$
I(\Pi_{t})=\sum_{x \in \eta}\1(x\in \Pi_{t}^o).
$$
The number of (ordered) $k$-tuples of pairwise distinct inner points of $\Pi_{t}$ is given by $I(\Pi_{t})_{(k)}$.
To calculate the expected value of this, we use for a point set $\xi$ the notation $[\xi]$ for the convex hull of the points in $\xi$
and apply the Slivnyak-Mecke formula (\ref{eq:SMF}).
\begin{eqnarray*}
\E I(\Pi_{t})_{(k)}
&=&
\E\sum_{(x_{1}, \dots ,x_{k})\in \eta^k_{ \neq}} \prod\limits_{j=1}^{k} \1(x_{j}\in [\eta]^o) 
\\ &=&
t^k \E \int\limits_{{\R^d} } \dots \int\limits_{{\R^d} } \prod\limits_{j=1}^{k} 
\1(x_{j}\in [\eta, x_1, \dots x_k]^{o})\; \dint \m(x_{1})...\dint \m(x_{k})
\\ &=&
t^k \E  \int\limits_{{\R^d} } \dots \int\limits_{{\R^d} }  \prod\limits_{j=1}^{k} \1(x_{j}\in [\eta]^o) \; \dint \m(x_{1}) \dots \dint \m(x_{k})
\\ &=&
t^k\E\m(\Pi_{t}^o)^k
\end{eqnarray*} 
Since $\mu$ is absolutely continuous,  Theorem~\ref{th:ident}  follows.
\qed
\end{proof}

This identity leads to the relation between the generating function of the number of inner points and the moment generating function of the $\mu$-content of the random polytope $\Pi_{t}$, as already stated in \\[1ex]
{\bf Theorem~\ref{th:charfct}.}
{\it The generating function $g_{I(\Pi_{t})}$ of the number of inner points and the moment generating function $h_{\mu(\Pi_{t})}$ of the $\mu$-measure of $\Pi_t$ are entire functions on $\C$ and satisfy
$$
g_{I(\Pi_{t})}(z+1)=h_{\mu(\Pi_{t})}(tz).
$$
}

\begin{proof}
Recall that the generating function of the inner points is given by
$$g_{I(\Pi_{t})}(z) = \E z^{I(\Pi_{t})} = \sum_{k=0}^{\infty} z^k \P( I(\Pi_{t})=k)  .$$ 
For $|z|<1$ the generating function is always absolutely convergent. Since $I(\Pi_{t}) \leq \eta (\R^d)$,  we also have for $|z| \geq 1$
$$
| z^{I(\Pi_{t})} | \leq |z^{\eta(\R^d)}|.
$$
This implies 
$$
| \E z^ {I(\Pi_{t})} | \leq \E |z^ {\eta(\R^d)}| = \sum_{k=1}^{\infty} |z|^k e^ {-t} \frac{t^k}{k!} = e^ {t(|z|-1) } < \infty
$$ 
because $\eta(\R^d)$ is Poisson distributed with parameter $t$.
Hence, $g_{I(\Pi_{t})}$ is an entire function on $\C$.

It is well known that if $g_{I(\Pi_{t})}$ is an entire function, the $k$-th derivatives of $g_{I(\Pi_{t})}$ at the point $z=1$ are the $k$-th factorial moments of $I(\Pi_{t})$.
$$
g_{I(\Pi_{t})}^{(k)} (1)=\E I(\Pi_{t})(I(\Pi_{t})-1)\cdot \dots \cdot(I(\Pi_{t})-k+1)z^{I(\Pi_{t})-k}|_{z=1}=\E I(\Pi_{t})_{(k)}
$$
We evaluate the analytic function $g_{I(\Pi_{t})} (z+1)$ at $z=0$ and deduce
\begin{equation}\label{eq:gIreihe}
g_{I(\Pi_{t})}(z+1)
=\sum_{k=0}^{\infty}g_{I(\Pi_{t})}^{(k)} (1)\,  \frac{z^k}{k!}
=\sum_{k=0}^{\infty}\E I(\Pi_{t})_{(k)}\ \frac{z^k}{k!}.
\end{equation}

Since the random variable $\mu(\Pi_{t}) $ is bounded by $ \mu(\R^d)=1 $, the moment generating function of $\mu(\Pi_{t})$ is also an entire function. Its derivatives at $z=0$ are given by the moments of $\mu(\Pi_{t})$.
$$
h_{\mu(\Pi_{t})}^{(k)} (0)=\E \mu(\Pi_{t})^{k} e^{z \mu(\Pi_{t})}|_{z=0}=\E \mu(\Pi_{t})^k
$$
Because $h_{\mu(\Pi_{t})} (z)$ is an entire function and thus analytic, we can write 
\begin{equation}\label{eq:hmreihe}
h_{\mu(\Pi_{t})}(z) =
\sum_{k=0}^{\infty} h_{\mu(\Pi_{t})}^{(k)} (0) \frac{z^k}{k!} 
=
\sum_{k=0}^{\infty}  \E {\mu(\Pi_{t})}^{k}\  \frac{z^k}{k!} .
\end{equation}
Combining (\ref{eq:gIreihe}) and (\ref{eq:hmreihe}) with Theorem~\ref{th:ident} proves Theorem~\ref{th:charfct}. \qed
\end{proof}

In the next step we use this relation between the moment generating function of $\mu(\Pi_t)$ and the generating function of $I(\Pi_t)$ to prove a relation between their cumulants. First recall that the cumulant generating function of a random variable $X$ is given by
$$\ln h_{X}(z) = \ln  \E e^{zX} =  \sum_{k=1}^\infty \kappa_k \frac{t^k}{k!} , $$ 
where $\kappa_k$ is the cumulant of order $k$.
Due to Theorem~\ref{th:charfct} we have 
\begin{equation}\label{eq:lnhI} 
\ln h_{\mu(\Pi_{t})}(tz) = \ln g_{I(\Pi_{t})}(z+1)  =  \ln h_{I(\Pi_{t})}(\ln(z+1))  . 
\end{equation}
Essential for the relation between the cumulants of the moment generating function of $\mu(\Pi_t)$ and the generating function of $I(\Pi_t)$ are the Stirling numbers of the first kind defined by the expansion of the function \linebreak
$z_{(n)} = z (z-1) \dots (z-n+1)$ for $n\in \N$ into a power series in $z$,
$$z _{(n)} =\sum\limits_{k=1}^{n}\begin{bmatrix} n \\ k \end{bmatrix} z^k . $$
The Stirling numbers of the first kind satisfy (or can equivalently be defined by)
\begin{equation}\label{eq:Stirlingln}
 \frac{\ln^j (z+1)}{j!} =\sum\limits_{k=j}^{\infty} \begin{bmatrix} k \\ j \end{bmatrix} \frac {z^k} {k!} .
\end{equation}

\begin{theorem}\label{th:cumulants}
Let $\kappa_k(\mu(\Pi_t))$, resp. $\kappa_k(I(\Pi_t))$ be the cumulants of the $\mu$-measure $\mu(\Pi_t)$, resp. of the number of inner points $I(\Pi_t)$. Then
$$
t^k \kappa_k (\mu(\Pi_t)) = 
\sum\limits_{j=1}^{k}\begin{bmatrix} k \\ j \end{bmatrix} \kappa_j(I(\Pi_t)) .
$$
\end{theorem}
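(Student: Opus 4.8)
The plan is to start from the cumulant relation \eqref{eq:lnhI}, which already encodes everything we need:
$$
\ln h_{\mu(\Pi_{t})}(tz) = \ln g_{I(\Pi_{t})}(z+1) = \ln h_{I(\Pi_{t})}(\ln(z+1)).
$$
The left-hand side, expanded in powers of $tz$, has coefficients governed by the cumulants $\kappa_k(\mu(\Pi_t))$; the right-hand side, expanded in powers of $z$, has coefficients governed by the cumulants $\kappa_j(I(\Pi_t))$. The whole identity is therefore just a matching of coefficients of $z^k$ on the two sides, and the Stirling numbers of the first kind will appear precisely when we convert powers of $\ln(z+1)$ into powers of $z$.

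First I would expand the far right-hand side. By definition of the cumulants of $I(\Pi_t)$,
$$
\ln h_{I(\Pi_{t})}(w) = \sum_{j=1}^{\infty} \kappa_j(I(\Pi_t)) \frac{w^j}{j!},
$$
and substituting $w = \ln(z+1)$ gives
$$
\ln h_{I(\Pi_{t})}(\ln(z+1)) = \sum_{j=1}^{\infty} \kappa_j(I(\Pi_t)) \frac{\ln^j(z+1)}{j!}.
$$
Now I would apply the Stirling expansion \eqref{eq:Stirlingln}, replacing each $\ln^j(z+1)/j!$ by $\sum_{k=j}^{\infty} \begin{bmatrix} k \\ j \end{bmatrix} z^k/k!$. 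After interchanging the order of summation, the coefficient of $z^k/k!$ becomes $\sum_{j=1}^{k} \begin{bmatrix} k \\ j \end{bmatrix} \kappa_j(I(\Pi_t))$, which is exactly the right-hand side of the claimed identity.

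Next I would expand the far left-hand side. Since $\ln h_{\mu(\Pi_{t})}(s) = \sum_{k=1}^{\infty} \kappa_k(\mu(\Pi_t)) s^k/k!$ by definition, substituting $s = tz$ gives
$$
\ln h_{\mu(\Pi_{t})}(tz) = \sum_{k=1}^{\infty} \kappa_k(\mu(\Pi_t)) \frac{t^k z^k}{k!},
$$
so the coefficient of $z^k/k!$ here is simply $t^k \kappa_k(\mu(\Pi_t))$. Equating the two coefficients of $z^k/k!$ yields the theorem. The routine obstacles are the justification of the term-by-term manipulations: I need the three generating functions to be analytic in a neighborhood of the relevant points so that the power series converge and coefficient comparison is legitimate, and I need to justify swapping the two summations in the Stirling step. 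These are handled by the entirety of $h_{\mu(\Pi_t)}$ and $g_{I(\Pi_t)}$ established in Theorem~\ref{th:charfct}, together with the fact that $\ln(z+1)$ is analytic near $z=0$ and $h_{I(\Pi_t)}$ is analytic near $0$ (the latter following from $g_{I(\Pi_t)}$ being entire and nonvanishing near $z=1$, since $g_{I(\Pi_t)}(1)=1$); all manipulations take place for $z$ in a small disc where absolute convergence permits the rearrangement. The genuinely substantive point — and the only place where the combinatorics does real work — is recognizing that the change of variables $s \mapsto \ln(s+1)$ relating the two cumulant generating functions is exactly the operation inverted by the Stirling numbers of the first kind.
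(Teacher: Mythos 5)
Your proposal is correct and follows essentially the same route as the paper: starting from the identity $\ln h_{\mu(\Pi_{t})}(tz) = \ln h_{I(\Pi_{t})}(\ln(z+1))$, expanding both sides in cumulants, converting the powers of $\ln(z+1)$ into powers of $z$ via the Stirling numbers of the first kind, and comparing coefficients of $z^k/k!$. Your added remarks on analyticity and on the legitimacy of the series rearrangement are sound and, if anything, slightly more careful than the paper's own presentation.
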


\begin{proof}
By definition of the cumulants and because of (\ref{eq:lnhI}) we have
\begin{eqnarray} \label{eq:cum1}
\sum_{k=1}^\infty t^k  \kappa_k (\mu(\Pi_t)) \frac{ z^k}{k!}  
&=&  
\ln h_{\mu(\Pi_{t})}(tz)  
\\ &=& \nonumber
\ln h_{I(\Pi_{t})}(\ln(z+1)). 
\end{eqnarray}
We expand the last expression in a series in $\ln(z+1)$ with coefficients given by the cumulants of $I(\Pi_t)$.
\begin{eqnarray*}
\ln \E e^{\ln(z+1) I(\Pi_t)}
 &=&
\sum_{j=1}^\infty \kappa_j(I(\Pi_t)) \frac{\ln^j (z+1)}{j!}   
\end{eqnarray*}
Using property (\ref{eq:Stirlingln}) for the logarithmic term gives
\begin{eqnarray} \label{eq:cum2} \nonumber
\sum_{j=1}^\infty \kappa_j(I(\Pi_t)) \frac{\ln^j (z+1)}{j!}   
&=&
\sum_{j=1}^\infty \kappa_j(I(\Pi_t))  \sum\limits_{k= j}^{\infty}  \begin{bmatrix} k \\ j \end{bmatrix} \frac{z^k}{k!} 
\\&=& 
\sum_{k=1}^\infty 
\Big( \sum\limits_{j=1}^{k}\begin{bmatrix} k \\ j \end{bmatrix} \kappa_j(I(\Pi_t)) \Big)\ 
\frac{z^k}{k!}. 
\end{eqnarray}
Comparing coefficients of $\frac{z^k}{k!}$ in (\ref{eq:cum1}) and (\ref{eq:cum2}) proves our theorem. \qed
\end{proof}

\section{Results for the number of vertices}\label{sec:resultsN}

Analogously to Theorem~\ref{th:charfct} we want to state a theorem connecting the measure of the missed set 
$\Delta(\Pi_{t})=\mu(\R^d \setminus \Pi_{t})$ 
and the number of vertices $N(\Pi_{t})$. Moreover, we find a relation between higher moments of these two variables. However, the relation in this case is not that immediate as the identity in the case of the inner points of $\Pi_{t}$.

\begin{theorem}\label{th:mgfctN2}
The generating function $g_{N(\Pi_{t})}$ of the number of vertices and the moment generating function $h_{\Delta(\Pi_{t})}$ of the $\mu$-measure of $\R^d \backslash \Pi_t$ satisfy for $x \in [0,1]$
$$
g_{N(\Pi_t)} (x) =
h_{\Delta(\Pi_{xt})} (t(x-1)).
$$
\end{theorem}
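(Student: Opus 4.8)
The plan is to read $x\in[0,1]$ as a retention probability and to exploit the splitting (colouring) property of Poisson point processes. First I would colour each point of $\eta$ independently red with probability $x$ and blue with probability $1-x$. By the marking theorem the red points form a Poisson point process $\eta_r$ with intensity measure $xt\mu$ and the blue points an \emph{independent} Poisson point process $\eta_b$ with intensity measure $(1-x)t\mu$; in particular $[\eta_r]$ has the same distribution as $\Pi_{xt}$. The colouring is independent of the positions, so conditionally on the configuration $\eta$ the probability that all $N(\Pi_t)$ vertices receive the colour red equals $x^{N(\Pi_t)}$. Taking expectations,
$$
g_{N(\Pi_t)}(x)=\E x^{N(\Pi_t)}=\P(\text{every vertex of }\Pi_t\text{ is red}).
$$

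Next I would translate the event ``every vertex is red'' into a statement about the blue points. Since the vertices of $\Pi_t=[\eta]$ span the whole polytope, all vertices being red forces $[\eta_r]=[\eta]$, and conversely if $[\eta_r]=[\eta]$ then every extreme point of $[\eta]$ lies in $\eta_r$ and is red; moreover $[\eta_r]=[\eta]$ holds if and only if $\eta_b\subset[\eta_r]$, i.e. no blue point falls in $\R^d\setminus[\eta_r]$. Here absolute continuity of $\mu$ is used only to identify, with probability one, the non-interior points counted by $N(\Pi_t)=\sum_\eta\1(x\notin\Pi_t^o)$ with the actual vertices, and to guarantee $\mu(\partial[\eta_r])=0$.

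Finally I would condition on $\eta_r$. Given $\eta_r$, the restriction of $\eta_b$ to $\R^d\setminus[\eta_r]$ is Poisson with parameter $(1-x)t\,\mu(\R^d\setminus[\eta_r])=(1-x)t\,\Delta([\eta_r])$, so its void probability equals $e^{-(1-x)t\Delta([\eta_r])}=e^{t(x-1)\Delta([\eta_r])}$. Averaging over $\eta_r$ and using that $[\eta_r]$ has the distribution of $\Pi_{xt}$ gives
$$
g_{N(\Pi_t)}(x)=\E\, e^{t(x-1)\Delta(\Pi_{xt})}=h_{\Delta(\Pi_{xt})}(t(x-1)),
$$
which is the assertion.

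The main obstacle I anticipate is the careful justification of the set-theoretic equivalence in the middle paragraph together with the bookkeeping of the exceptional null events: one must argue that ``every vertex red'' and ``no blue point outside $[\eta_r]$'' agree almost surely, invoking absolute continuity to rule out blue points on $\partial[\eta_r]$ and to ensure that a.s.\ no non-vertex boundary point of $\Pi_t$ occurs. Setting up the coupling via the splitting theorem so that $[\eta_r]$ is literally the hull of a Poisson process of intensity $xt\mu$ (rather than only equal in distribution) is what makes the final conditioning clean. An alternative would be to mirror the proof of Theorem~\ref{th:ident} by first establishing a factorial-moment identity for $N(\Pi_t)$, but that route looks less direct, since the event that $k$ prescribed points are \emph{simultaneously} vertices does not decouple as pleasantly as the inner-point condition.
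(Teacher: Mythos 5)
Your proposal is correct and follows essentially the same route as the paper: both split $\eta$ by independent thinning into a Poisson process of intensity $xt\mu$ and an independent one of intensity $(1-x)t\mu$, and compute the probability that all vertices of $\Pi_t$ come from the first process in two ways --- once by conditioning on the colouring to get $\E x^{N(\Pi_t)}$, and once as the void probability $\E e^{t(x-1)\Delta(\Pi_{xt})}$ of the second process outside $\Pi_{xt}$. Your write-up is somewhat more careful about the almost-sure identification of vertices and the boundary null sets, but the argument is the same.
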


Before giving the proof of this theorem, we compare Theorem~\ref{th:charfct} to Theorem~\ref{th:mgfctN2}. Substituting $z$ by $z-1$ in the first mentioned theorem, the statements of these Theorems read as
\begin{eqnarray} \label{eq:gIhmu}
g_{I(\Pi_{t})}(z) &=& h_{\,\mu(\Pi_{t})\,}(t(z-1)),
\\ \nonumber
g_{N(\Pi_t)} (x) &=&  h_{\Delta(\Pi_{xt})} (t(x-1)).
\end{eqnarray}
 The main difference is the occurrence  of $x$ in the random variable $\Delta(\Pi_{xt})$ in the second line, which makes sense only if $x$ is in $\R_+$ and makes it impossible to extend the right hand side to an holomorphic function. It would be of interest to deduce one of these identities from the other, but we have been unable to find a connection.

It should be remarked that it is possible to prove the identity (\ref{eq:gIhmu}) for $z \in [0,1]$ using the method applied in the proof of Theorem~\ref{th:mgfctN2}. By the identity theorem for holomorphic functions we could deduce that equality holds for all $z \in \C$ because $g_{I(\Pi_{t})}$ and $h_{\,\mu(\Pi_{t})\,}$ are entire functions.
It is straightforward to prove that also $g_{N(\Pi_t)}(z)$ and $h_{\Delta(\Pi_{t})}(z)$ are both entire functions, but we make no use of this fact in our investigations.

\begin{proof}[of Theorem~\ref{th:mgfctN2}]
Suppose $\eta_{xt}$ and $\bar{\eta}_{yt}$ are two independent Poisson point processes on ${\R^d}  $ with intensity measure $xt \mu$, resp. $yt \mu$ with $x,y\geq 0,\ x+y=1$. It is well known that 
$$\eta \stackrel{d}{=} \eta_{xt} + \bar{\eta}_{yt} .$$
Conversely, if we split $\eta$ into two point sets by deciding for each point of $\eta$ independently if it belongs to $\eta_1$ with probability $x$ or to $\eta_2$ with probability $y=1-x$, then $\eta_1$, resp. $ \eta_2$ equals $\eta_{xt}$, resp. $ \bar{\eta}_{yt}$ in distribution.

Denote by $\cF_N(\Pi_t)$ the set of vertices of $\Pi_t$. As described above we split $\eta$ into $\eta_{xt}$ and $\bar \eta_{yt}$ and consider the event that all vertices of $\Pi_{t}$ emerge from $\eta_{xt}$. This event occurs if no point of $\bar{\eta}_{yt}$ is contained in ${\R^d}  \setminus \Pi_{xt}$, where $\Pi_{xt}$ is the convex hull of the points of $\eta_{xt}$. Because  these point processes are independent, we have
\begin{equation}\label{eq:hD}
\P(\cF_N(\Pi_{t}) \subset \eta_{xt} )
=
\P(\bar{\eta}_{yt} ({\R^d}  \setminus \Pi_{xt} )=0) 
=
\E (e^{-yt \Delta(\Pi_{xt}) }).
\end{equation}
Moreover, to compute $\P(\cF_N(\Pi_{t}) \subset \eta_{xt} )$ we first condition on the number of vertices
$$
\P(\cF_N(\Pi_{t}) \subset \eta_{xt} | N (\Pi_{t}) =k) =  x^{k},
$$
which follows from the splitting argument stated above. Taking expectation and thus removing the condition, we get
\begin{equation} \label{eq:gN}
\sum\limits_{k=0}^{\infty}  x^{k} \P(N(\Pi_{t}) =k)= \E x^{N(\Pi_{t})}.
\end{equation}

Combining (\ref{eq:hD}) and (\ref{eq:gN}) yields our Theorem.  \qed
\end{proof}

Theorem~\ref{th:mgfctN2} states the relation between the factorial moment generating function of the number of vertices and the moment generating function of the $\mu$-content of the missed set $\R^d \setminus \Pi_{xt}$. 
Due to the occurrence of $x$ in the random variable $\Delta(\Pi_{xt})$ it seems impossible to state a simple identity between factorial moments of $N(\Pi_t)$ and $\Delta(\Pi_t)$.
As can be seen in the next theorem, there is a much more complicated relation for the moments of these two random variables.
Again we use the notation $[\xi]$ for the convex hull of points of a point set $\xi$.

\begin{theorem}\label{th:N}
Let $N(\Pi_{t})$ be the number of vertices and $\Delta(\Pi_{t})$ the $\mu$-content of the complement of $\Pi_{t}$. Then for $k \in \N$
\begin{eqnarray*}
\E N(\Pi_t)_{(k)}
&=&
t^k \, \E \Delta(\Pi_t)^{k}
-  t^k\sum_{r=1}^{k-1} (-1)^{r+1} {\binom k r} 
\\ \nonumber && \times
\E \int\limits_{{\R^d}  \setminus \Pi_t} \! \dots \! \int\limits_{{\R^d}  \setminus \Pi_t} \mu([\eta, x_1, \dots, x_{k-r}] \setminus [\eta])^r \, 
\dint\mu(x_1) \dots \dint\mu(x_{k-r}) .
\end{eqnarray*}
\end{theorem}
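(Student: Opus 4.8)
The plan is to imitate the proof of Theorem~\ref{th:ident}, starting from the Slivnyak--Mecke formula. Writing $N(\Pi_t)_{(k)}=\sum_{(x_1,\dots,x_k)\in\eta^k_{\neq}}\prod_{j=1}^k\1(x_j\notin\Pi_t^o)$ and applying (\ref{eq:SMF}) gives
$$\E N(\Pi_t)_{(k)}=t^k\int_{\R^d}\dots\int_{\R^d}\E\prod_{j=1}^k\1\big(x_j\notin[\eta,x_1,\dots,x_k]^o\big)\,\dint\mu(x_1)\dots\dint\mu(x_k).$$
Each $x_j$ is a generator of $[\eta,x_1,\dots,x_k]$, so it fails to lie in the interior precisely when it lies on the boundary; since $\mu$ is absolutely continuous, the event that a generator lies on the boundary without being a vertex is $\mu$-negligible. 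Hence almost surely I may replace $\1(x_j\notin[\eta,x_1,\dots,x_k]^o)$ by the indicator that $x_j$ is a vertex of $[\eta,x_1,\dots,x_k]$, that is by $\1(x_j\notin[\eta,\{x_i:i\neq j\}])$.

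Next I would separate the condition of lying outside $\Pi_t=[\eta]$ from the vertex condition. A vertex of $[\eta,x_1,\dots,x_k]$ necessarily lies outside $[\eta]$, so pointwise
$$\1(x_j\text{ is a vertex})=\1(x_j\notin[\eta])-\1\big(x_j\in[\eta,\{x_i:i\neq j\}]\setminus[\eta]\big).$$
Expanding $\prod_{j=1}^k$ of the right-hand side over all subsets $A\subseteq\{1,\dots,k\}$, with $V=\{1,\dots,k\}\setminus A$, produces
$$\prod_{j=1}^k\1(x_j\text{ vertex})=\sum_{A\subseteq\{1,\dots,k\}}(-1)^{|A|}\prod_{j\in A}\1\big(x_j\in[\eta,\{x_i:i\neq j\}]\setminus[\eta]\big)\prod_{j\in V}\1(x_j\notin[\eta]).$$
The goal is then to integrate this identity term by term and identify the sum on the right of the theorem.

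The decisive step, which I expect to be the main obstacle, is a geometric lemma that decouples the indices in $A$: for each fixed $A$ the summand is unchanged if every hull $[\eta,\{x_i:i\neq j\}]$ with $j\in A$ is replaced by the single hull $[\eta,\{x_i:i\in V\}]$. To see this, note that on the set where the summand does not vanish each $x_j$, $j\in A$, lies in the convex hull of all the remaining generators and is therefore not an extreme point of $[\eta,x_1,\dots,x_k]$. Since a polytope is the convex hull of its vertices, all vertices lie in $\eta\cup\{x_i:i\in V\}$, whence $[\eta,x_1,\dots,x_k]=[\eta,\{x_i:i\in V\}]$ and in particular $x_j\in[\eta,\{x_i:i\in V\}]\setminus[\eta]$ for every $j\in A$. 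The reverse inclusion $[\eta,\{x_i:i\in V\}]\subseteq[\eta,\{x_i:i\neq j\}]$ is immediate, so the two products of indicators coincide identically.

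After this replacement the factors indexed by $A$ no longer interact, and integrating each $x_j$ with $j\in A$ over $\R^d$ contributes a factor $\mu\big([\eta,\{x_i:i\in V\}]\setminus[\eta]\big)$, while the indicators $\1(x_i\notin[\eta])$ with $i\in V$ restrict the remaining integration to $\R^d\setminus\Pi_t$. Thus the $A$-summand becomes $(-1)^{|A|}\E\int_{(\R^d\setminus\Pi_t)^{|V|}}\mu([\eta,\{x_i:i\in V\}]\setminus[\eta])^{|A|}\,\dint\mu^{|V|}$. Finally I would group the summands according to $r=|A|$: by symmetry of the product measure the $\binom{k}{r}$ subsets of size $r$ all contribute equally, the term $r=0$ equals $t^k\E\Delta(\Pi_t)^k$, and the term $r=k$ vanishes because $[\eta]\setminus[\eta]=\emptyset$. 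Writing $(-1)^r=-(-1)^{r+1}$ for $1\le r\le k-1$ then gives exactly the asserted identity; the interchanges of expectation and integration are legitimate since $\E N(\Pi_t)_{(k)}\le\E\eta(\R^d)_{(k)}=t^k<\infty$.
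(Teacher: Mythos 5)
Your proposal is correct, and its skeleton is the same as the paper's: Slivnyak--Mecke, an inclusion--exclusion expansion of the product of indicators, identification of the $r=0$ term with $t^k\,\E\Delta(\Pi_t)^k$, vanishing of the $r=k$ term, and grouping by $r$ with the factor $\binom{k}{r}$. The difference lies in how the expansion is organized and, more importantly, in what gets justified. The paper factors $\1(x_j\notin[\eta,x_1,\dots,x_k]^o)=\1(x_j\notin[\eta]^o)\,\1\bigl(x_j\notin[\eta,x_1,\dots,x_k]^o\setminus[\eta]^o\bigr)$ and applies its sieve formula (\ref{eq:siebf}) to the single region $A=[\eta,x_1,\dots,x_k]^o\setminus[\eta]^o$, so every indicator still involves the full hull of $\eta$ and all $k$ points; the passage in its final equality to $\mu([\eta,x_1,\dots,x_{k-r}]\setminus[\eta])^r$ --- that is, dropping the points indexed by $I$ from the hull so that the integrations decouple --- is carried out silently. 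You instead pass (up to $\mu$-null sets, using absolute continuity) to the vertex indicators $\1(x_j\notin[\eta,\{x_i:i\neq j\}])$, expand the product of differences over subsets $A$, and then prove the decoupling explicitly: on the event that each $x_j$, $j\in A$, lies in the hull of the remaining generators, none of these points is extreme, so $[\eta,x_1,\dots,x_k]=[\eta,\{x_i:i\in V\}]$, and the leave-one-out hulls may all be replaced by this common hull. That lemma is precisely the justification the paper's last step needs but does not spell out, so your write-up is, if anything, more complete; apart from this bookkeeping (leave-one-out hulls and a binomial expansion versus the paper's sieve applied to one region), the two arguments are equivalent.
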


The particular case $k=1$ gives a simple identity for the expected values
\begin{equation}\label{eq:IdNk1}
\E N(\Pi_t) = t \E \Delta (\Pi_t) .
\end{equation}
And for $k=2$ we obtain the more complicated expression
\begin{equation}\label{eq:IdNk2}
\E N(\Pi_t)_{(2)}=
t^2 \E \Delta(\Pi_t)^{2}
- 2 t^2\E \int\limits_{{\R^d}  \setminus \Pi_t} \mu( [\eta,x] \setminus [\eta])  
\, \dint \mu(x). 
\end{equation}
Formulas (\ref{eq:IdNk1}) and (\ref{eq:IdNk2}) can be used to deduce for the variances the relation 
\begin{equation}\label{VarN}
\V N(\Pi_t)  =  t^2 \V \Delta(\Pi_t)  + t \E \Delta(\Pi_{t}) 
- 2 t^2\E \int\limits_{{\R^d}  \setminus \Pi_t} \mu( [\eta,x] \setminus [\eta])  
\, \dint \mu(x). 
\end{equation}

\begin{proof}[of Theorem~\ref{th:N}]
We are interested in the factorial moments of the number of vertices $N(\Pi_t)=\sum\1(x\notin \Pi_t^{o}) $ of the random polytope $\Pi_{t}$.
We apply the Slivnyak-Mecke formula (\ref{eq:SMF}) to obtain
\begin{align*}
\E N(\Pi_t)_{(k)}
& = \E \sum_{(x_{1}, \dots ,x_{k}) \in \eta^{k}_{\neq}}\1(x_{1}\notin \Pi_{t}^{o}) \dots \1(x_{k}\notin \Pi_{t}^{o})\\ 
& = 
t^k \ \E \int\limits_{\R^d}  \dots \int\limits_{\R^d} 
\prod_{j=1}^k \1(x_{j}\notin [\eta,x_1, \dots, x_k]^{o})  \, \dint \mu(x_1) \dots \dint \mu(x_{k}).
\end{align*}
To go further we have to evaluate the occurring product. For this we make use of formula (\ref{eq:siebf}) with 
$$
A  = [\eta, x_1, \dots , x_k]^{o} \setminus [\eta]^o,
$$
that is
\begin{align*}
&
\1(\bigcup_{j=1}^k \{x_j\} \cap [\eta, x_1, \dots , x_k]^{o} \setminus [\eta]^o \neq \emptyset)
\\ &= 
\sum_{r=1}^{k-1} (-1)^{r+1} \sum_{I\in \{1, \dots ,k\}^r_{\neq} } 
\1(\bigcup_{j \in I} \{x_j\} \subset [\eta, x_1, \dots , x_k]^{o} \setminus [\eta]^o ).
\end{align*}
Because it is impossible that all points $\{ x_1, \dots, x_k\}$ are in  $[\eta, x_1, \dots , x_k]^{o} \setminus [\eta]^o$, the term for $r=k$ is missing.
If we multiply both sides by $ \prod_{j=1}^k \1(x_j \notin [\eta]^o)$ we obtain 
\begin{eqnarray*}
\prod_{j=1}^k \1(x_{j} \lefteqn{\notin [\eta, x_1, \dots, x_k]^{o})} &&
\\  &=& 
\prod_{j=1}^k \1(x_j \notin [\eta]^o)  
\1(x_j \notin [\eta, x_1, \dots , x_k]^{o} \setminus [\eta]^o )
\\  &=& 
\prod_{j=1}^k \1(x_j \notin [\eta]^o) 
\Big(1- \1(\bigcup_{j=1}^k \{x_j\} \cap [\eta, x_1, \dots , x_k]^{o} \setminus [\eta]^o \neq \emptyset) \Big) 
\\ &=& 
\prod_{j=1}^k \1(x_j \notin [\eta]^o) -
\sum_{r=1}^{k-1} (-1)^{r+1} 
\\ && \hskip1cm 
\sum_{I\in \{1, \dots ,k\}^r_{\neq} } \ 
\prod_{j \in I} \1(x_j\in [\eta, x_1, \dots , x_k]^{o} \setminus [\eta]^o) 
\prod_{j=1}^k \1(x_j \notin [\eta]^o) .
\end{eqnarray*}
In the next step we have to integrate over all $x_1, \dots , x_k $ in $ \R^d $ or, more precisely, over $ \R^d \setminus \Pi_t$ because of the indicator functions $\1(x_j \notin [\eta]^o)$. The integral of the first term on the right side thus equals $ \Delta(\Pi_{t})^k$. We obtain
\begin{eqnarray*}
\E N(\Pi_t)_{(k)}
&=&
t^k \ \E \Delta(\Pi_t)^k 
- t^k\ \sum\limits_{r=1}^{k-1} (-1)^{r+1}  \sum_{I\in \{1, \dots ,k\}^r_{\neq} }
\\ && 
 \E 
\int\limits_{{\R^d}  \setminus \Pi_t} \!\!\! \dots \!\!\! \int\limits_{{\R^d}  \setminus \Pi_t} 
\prod_{j \in I} \1(x_j\in [\eta, x_1, \dots , x_k]^{o} \setminus [\eta]^o) \dint \mu(x_1) \dots \dint \mu(x_k)
\\ &=&
t^k \ \E \Delta(\Pi_t)^k 
- t^k\ \sum\limits_{r=1}^{k-1} (-1)^{r+1}  \binom k r
\\ && 
 \E 
\int\limits_{{\R^d}  \setminus \Pi_t} \dots \int\limits_{{\R^d}  \setminus \Pi_t} 
\mu([\eta, x_1, \dots , x_{k-r}] \setminus [\eta])^r \dint \mu(x_1) \dots \dint \mu(x_{k-r}).
\end{eqnarray*}
\qed \end{proof}

\section{Applications}\label{sec:appl}

In the last thirty years many papers have been devoted to compute the asymptotic distribution of the quantities mentioned above, in many cases for the Poisson model and under the assumption that $\mu$ is the uniform distribution on a smooth convex set or a polytope, or for the $d$-dimensional Gaussian measure. Most of this results carry over to the binomial model by some de-Poissonization arguments, see e.g. the papers by Calka and Yukich \cite{CaYu} and B{\'a}r{\'a}ny and \linebreak Reitzner \cite{BRe1,BRe2}.

In this last chapter we want to contribute to these results giving an example of how our results can be applied. 
Assume that $K \in \cK^k_+$, i.e. it has $k$-times continuously differentiable boundary of positive Gaussian curvature and volume one. 
Let $\mu(\cdot)=\l_d(K \cap \cdot)$ be Lebesgue measure restricted to the convex body $K$, and hence $\Pi_t$ is a Poisson polytope inscribed in $K$. 
After planar results going back to Renyi and Sulanke (\cite{RS1} and \cite{RS2}) it was shown by B{\'a}r{\'a}ny \cite{Bar2} that for any $d$-dimensional smooth convex body $K \in \cK^3_+$
\begin{eqnarray}
\label{eq:EN}
\E N(\Pi_{t}) &=& c_1 \Omega(K)t^{\frac{d-1}{d+1}}+o(t^{\frac{d-1}{d+1}}),
\\ \nonumber
\l_d(K) - \E \l_d (\Pi_t) &=&  c_1\Omega(K)t^{-\frac{2}{d+1}}+o(t^{\frac{d-1}{d+1}}) 
\end{eqnarray}
as $t\to \infty$, and where $\Omega(K)$ denotes the affine surface area of the boundary of $K$.
In fact, these results have been obtained for the binomial model, but it is easy to see that results for the binomial model immediately carry over to the Poisson model.
For a long time it was out of reach to compute the asymptotic behavior of the variance or even precise estimates. Only recently it was proved by Reitzner \cite{Re6}, using the Efron-Stein jackknife inequality, that for $K \in \cK^2_+$ there are constants $c_2(K),c_3(K)>0$ such that 
\begin{eqnarray*}
c_2(K) t^{- \frac {d+3}{d+1}}& \leq \V \l_d(\Pi_t) \leq & c_3(K) t^{- \frac {d+3}{d+1}}  ,\
 \\
c_2(K) t^{ \frac {d-1}{d+1}}  &\leq \V N(\Pi_{t}) \leq& c_3(K) t^{ \frac {d-1}{d+1}} .
\end{eqnarray*}
A very recent breakthrough was achieved by Calka and Yukich, who calculated in \cite{CaYu} the precise asymptotics for the variances of the number of vertices and the volume of the random polytope $\Pi_{t}$. We have for $K\in \mathcal{K}_{+}^{3}$
\begin{align}\label{eq:VarNYukich,Calka}
\V N(\Pi_{t})&=c_{4}\Omega(K)t^{\frac{d-1}{d+1}}+o(t^{\frac{d-1}{d+1}}) 
\end{align}
and for $K\in \mathcal{K}_{+}^{6}$
\begin{align}\label{eq:VarVYukich,Calka}
\V \l_d (\Pi_{t})&=c_{5}\Omega(K)t^{-\frac{d+3}{d+1}}+o(t^{-\frac{d+3}{d+1}}) 
\end{align}
as $t\to\infty$.
We can apply our identities to deduce one from the other. 
Because $\V \Delta(\Pi_{t})=\V \l_d(\Pi_{t})$, equation (\ref{VarN}) implies
\begin{align*}
t^2 \V \l_d(\Pi_t) 
&=  
\V N(\Pi_{t}) -  \E N(\Pi_{t}) 
+ 2 t^2\E \int\limits_{K \setminus \Pi_t} \l_d( [\eta,x] \setminus [\eta])  
\, \dint x \, .
\end{align*}
By (\ref{eq:VarNYukich,Calka}), and by (\ref{eq:EN}) it follows for $K\in \mathcal{K}_+^{3}$
$$
\V \l_d(\Pi_{t})=c_{6}\Omega(K)t^{-\frac{d+3}{d-1}}+o(t^{-\frac{d+3}{d-1}})
-2\E \int\limits_{K \setminus \Pi_t} \l_d([\eta,x] \setminus [\eta])  
\, \dint x
$$
as $t\to\infty$.
We will prove in the appendix that 
$D_{t}=\int\limits_{K \setminus \Pi_t} \l_d([\eta,x] \setminus [\eta]) \, dx$
satisfies
$$ \E D_t =  c_{7}   \Omega(K)  t^{-\frac {d+3} {d+1} } + o(t^{  -\frac {d+3} {d+1} }) $$
for $K \in \cK^2_+$ as $t\to\infty$. Combining these estimates proves the following corollary.
\begin{corollary}
For $K \in \cK^3_+$ we have
$$
\V \l_d(\Pi_{t})=c_{8}\Omega(K)t^{-\frac{d+3}{d-1}}+o(t^{-\frac{d+3}{d-1}})
$$
as $t\to\infty$.
\end{corollary}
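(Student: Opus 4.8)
The plan is to obtain the corollary by inserting the two sharp first-order asymptotics already at our disposal into the exact variance identity (\ref{VarN}). Write $D_t=\int_{K\setminus\Pi_t}\l_d([\eta,x]\setminus[\eta])\,\dint x$ as in the text. First I would isolate $\V\l_d(\Pi_t)$ in (\ref{VarN}): since $\Delta(\Pi_t)=1-\l_d(\Pi_t)$ gives $\V\Delta(\Pi_t)=\V\l_d(\Pi_t)$, and since the first-moment identity (\ref{eq:IdNk1}) reads $\E N(\Pi_t)=t\,\E\Delta(\Pi_t)$, the identity (\ref{VarN}) rearranges to
\begin{equation*}
\V\l_d(\Pi_t)=t^{-2}\bigl(\V N(\Pi_t)-\E N(\Pi_t)\bigr)+2\,\E D_t .
\end{equation*}

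Next I would feed in the asymptotics. By B\'ar\'any's expansion (\ref{eq:EN}) and the Calka--Yukich expansion (\ref{eq:VarNYukich,Calka}) -- both valid already for $K\in\cK^3_+$ -- the quantities $\E N(\Pi_t)$ and $\V N(\Pi_t)$ are each of order $\Omega(K)\,t^{(d-1)/(d+1)}$, so their difference is $(c_4-c_1)\Omega(K)\,t^{(d-1)/(d+1)}+o(t^{(d-1)/(d+1)})$. The only computation to record is the exponent produced by the factor $t^{-2}$,
\begin{equation*}
\frac{d-1}{d+1}-2=\frac{(d-1)-2(d+1)}{d+1}=-\frac{d+3}{d+1},
\end{equation*}
so that $t^{-2}\bigl(\V N(\Pi_t)-\E N(\Pi_t)\bigr)=(c_4-c_1)\Omega(K)\,t^{-(d+3)/(d+1)}+o(t^{-(d+3)/(d+1)})$. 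The appendix lemma, valid for $K\in\cK^2_+$, supplies the matching order $2\,\E D_t=2c_7\,\Omega(K)\,t^{-(d+3)/(d+1)}+o(t^{-(d+3)/(d+1)})$.

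Adding the two contributions yields
\begin{equation*}
\V\l_d(\Pi_t)=\bigl(c_4-c_1+2c_7\bigr)\,\Omega(K)\,t^{-(d+3)/(d+1)}+o\bigl(t^{-(d+3)/(d+1)}\bigr),
\end{equation*}
which is the corollary with $c_8=c_4-c_1+2c_7$. I would stress the smoothness bookkeeping here, because it is the real point of the statement: the ingredients require only $K\in\cK^3_+$ (for (\ref{eq:EN}) and (\ref{eq:VarNYukich,Calka})) and $K\in\cK^2_+$ (for the appendix lemma), so the combination delivers the volume-variance asymptotic under the weaker hypothesis $\cK^3_+$, in contrast to the direct Calka--Yukich expansion (\ref{eq:VarVYukich,Calka}), which needs $\cK^6_+$. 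On the overlap $\cK^6_+$ the two derivations must agree, which forces $c_8=c_5$ and in particular $c_8>0$, so no cancellation occurs between the two $t^{-(d+3)/(d+1)}$ terms. (The denominator in the exponent as printed in the corollary, and in the line preceding it, should read $d+1$; with $d-1$ it would be inconsistent with both the appendix lemma and (\ref{eq:VarVYukich,Calka}).)

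The combination above is pure bookkeeping of orders and is not where the difficulty lies. The one genuinely hard step is the appendix lemma estimating $\E D_t$: one must show that the expected extra volume $\l_d([\eta,x]\setminus[\eta])$ created by adjoining a single point $x\in K\setminus\Pi_t$, integrated over the missed region, is governed by the boundary layer of $K$ at the floating-body scale, and therefore produces exactly the affine-surface-area factor $\Omega(K)$ together with the power $t^{-(d+3)/(d+1)}$. Granting that lemma, the corollary follows immediately from the displayed rearrangement and the two substitutions.
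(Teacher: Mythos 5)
Your proposal is correct and takes essentially the same route as the paper: rearrange (\ref{VarN}) using (\ref{eq:IdNk1}) and $\V\Delta(\Pi_t)=\V\l_d(\Pi_t)$, insert the asymptotics (\ref{eq:EN}), (\ref{eq:VarNYukich,Calka}) and Theorem~\ref{th:appendix} for $\E D_t$, and collect the terms of order $t^{-(d+3)/(d+1)}$. You are also right on the two typographical points: the exponent denominator printed in the corollary (and in the display preceding it) should be $d+1$, not $d-1$, and the sign in front of the term $2\,\E D_t$ in the paper's intermediate display should be $+$, exactly as in your rearrangement.
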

This is the result of Calka and Yukich \cite{CaYu} for a slightly bigger class of convex bodies. As in their paper this could be transferred to a formula giving the asymptotic variance for the binomial model.

Furthermore, we can use this corollary, (\ref{eq:EN}) and (\ref{eq:idVar}) to obtain asymptotically the variance of the number of inner points $I(\Pi_{t})$ for $K\in \mathcal{K}^{3}_{+}$. 
\begin{align*}
\V I(\Pi_{t})
&=t^{2}\V \l_d(\Pi_{t})+t\E \l_d(\Pi_{t})\\
&=t+c_{9}\Omega(K)t^{\frac{d-1}{d+1}}+o(t^{\frac{d-1}{d+1}}) 
\end{align*}
as $t\to\infty$.
Observe that it follows immediately from $N(\Pi_t)+ I(\Pi_t)= \eta(K)$ that 
$$
\E I(\Pi_{t}) = 
t - c_{10}\Omega(K)t^{\frac{d-1}{d+1}}+o(t^{\frac{d-1}{d+1}})
$$
as $t\to\infty$. 

Similarly one could apply our identities in the case when the intensity measure of the Poisson point process is a multiple of the uniform measure on a polytope $K$, or a multiple of the Gaussian distribution. We refer to \cite{BRe2} and \cite{HMR}, and leave the details to the interested reader.

\section{Appendix}\label{Appendix}

\begin{theorem}\label{th:appendix}
Assume that $K \in {\mathcal K}^{2}_+$ with $\l_d (K)=1$, and let $\Pi_t = [\eta]$ be the Poisson polytope chosen according to the intensity measure $t \l_d (K \cap \cdot) $. 
Define 
$$D_{t}=\int\limits_{K \setminus \Pi_t} \l_d([\eta,x] \setminus [\eta]) \, dx . $$ 
Then there is a positive constant ${\rm C}_d$ depending on the dimension such that
$$ 
\lim\limits_{t \to \infty} \E D_t t^{ 1 +\frac {2} {d+1} }=  {\rm C}_d   \Omega(K) .
$$
\end{theorem}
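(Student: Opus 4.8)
The plan is to turn $\E D_t$ into the integral of a local boundary functional and then to run a parabolic rescaling near $\partial K$. First I would use $\l_d(K)=1$ (so that $\mu$ is Lebesgue measure on $K$) together with the fact that $\l_d([\eta,x]\setminus[\eta])=0$ whenever $x\in\Pi_t$ to write
\[
\E D_t=\int_K \E\big(\l_d([\eta,x])-\l_d([\eta])\big)\,\dint x=\int_K g_t(x)\,\dint x,
\]
where $g_t(x)=\E\,\l_d([\eta,x]\setminus[\eta])$ is the expected volume a deterministic point $x$ adds to $\Pi_t$. A structural remark guides the computation: by the Poisson intensity-differentiation formula (a consequence of the Slivnyak--Mecke formula (\ref{eq:SMF})) one has $\int_K g_t(x)\,\dint x=\frac{\dint}{\dint t}\E\l_d(\Pi_t)=-\frac{\dint}{\dint t}\E\Delta(\Pi_t)$, equivalently $\E\Delta(\Pi_t)=\int_t^\infty\E D_s\,\dint s$. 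Since (\ref{eq:EN}) gives $\E\Delta(\Pi_t)=c_1\Omega(K)t^{-2/(d+1)}+o(t^{-2/(d+1)})$, a formal differentiation already predicts the exponent $1+\tfrac{2}{d+1}=\tfrac{d+3}{d+1}$ and the value $C_d=\tfrac{2}{d+1}c_1$. This shortcut becomes rigorous only if $\E D_t$ is known to be eventually monotone, so that a monotone-density Tauberian theorem applies; monotonicity would follow from submodularity of the convex-hull volume, but that property fails pointwise, so I would not rely on it and would instead establish the limit directly.

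For the direct argument I would localize and rescale. \emph{Localization.} The economic cap covering estimates show that $g_t(x)$ is exponentially small unless $x$ lies in a boundary shell of width $O(t^{-2/(d+1)}\log t)$, so the integral concentrates there; I parametrize such $x$ by a boundary point $z\in\partial K$ and a normal distance $s\ge 0$. \emph{Rescaling.} Near $z$ I approximate $\partial K$ by its osculating paraboloid determined by the principal curvatures and apply the anisotropic scaling that stretches the tangential directions by $t^{1/(d+1)}$ and the normal direction by $t^{2/(d+1)}$. Under this scaling the Poisson process converges to a unit-intensity Poisson process beneath a standard paraboloid, the rescaled hull converges to a limiting infinite random polytope, and $\l_d([\eta,x]\setminus[\eta])=t^{-1}\widehat V(\hat s)(1+o(1))$, where $\widehat V$ is a universal added-volume functional of the rescaled height $\hat s=t^{2/(d+1)}s$.

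\emph{Assembly.} I would then write $\int_K g_t\,\dint x=\int_{\partial K}\int_0^\infty g_t(z,s)\,\dint s\,\dint\mathcal{H}^{d-1}(z)$, substitute $s=t^{-2/(d+1)}\hat s$ to extract the factor $t^{-1}\cdot t^{-2/(d+1)}=t^{-(d+3)/(d+1)}$, and pass to the limit $t\to\infty$. The tangential rescaling contributes a curvature Jacobian, and standardizing the osculating paraboloid to the unit paraboloid converts the $z$-dependence into a factor $\kappa(z)^{1/(d+1)}$; since $\int_{\partial K}\kappa(z)^{1/(d+1)}\,\dint\mathcal{H}^{d-1}(z)=\Omega(K)$ is exactly the affine surface area, while the surviving height integral $\int_0^\infty\E\widehat V(\hat s)\,\dint\hat s$ is a finite dimensional constant, this yields $\lim_{t\to\infty}\E D_t\,t^{(d+3)/(d+1)}=C_d\,\Omega(K)$ with $C_d>0$.

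The main obstacle is the interchange of the limit with the two integrations. I must prove uniform integrability of the rescaled functional $\widehat V(\hat s)$ — in particular that its large-height tail decays fast enough that $\int_0^\infty\E\widehat V(\hat s)\,\dint\hat s<\infty$ — and that the paraboloid-approximation error is negligible uniformly over $\partial K$; this is where the hypothesis $K\in\cK^2_+$, giving curvature bounded and bounded away from zero, is essential. Establishing the requisite moment bound on $\widehat V$, through a cap-covering or a stabilization estimate for the add-one-point volume increment, is the technical heart of the argument, and the value $C_d=\tfrac{2}{d+1}c_1$ obtained from the intensity-differentiation identity then serves as an independent consistency check on the constant.
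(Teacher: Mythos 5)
Your route is genuinely different from the paper's, and in outline it is a viable one, but as written it is a program rather than a proof. The paper never passes to a limiting rescaled process: it expands $\l_d([\eta,x]\setminus[\eta])$ over the facets of $\Pi_t$ visible from $x$, applies the Slivnyak--Mecke formula (\ref{eq:SMF}) to get the closed-form representation
$$
\E D_t \;=\; \frac{1}{d\,d!}\,t^d\int_K\cdots\int_K e^{-t\l_d(K_+(F))}\,\l_{d-1}(F)\,d_{{\rm aff} F}(x)\,\dint x_1\cdots\dint x_d\,\dint x ,
\qquad F=[x_1,\dots,x_d],
$$
where the void probability $e^{-t\l_d(K_+(F))}$ is explicit, and then computes the asymptotics of this deterministic integral (Lemma \ref{LemmaAppendix}) by the Blaschke--Petkantschin formula, osculating-paraboloid sandwich bounds with factors $(1+\delta)^{\pm 1}$, and an Abelian theorem for the Laplace transform; the affine surface area enters through $\int_{S^{d-1}}\kappa(u)^{-1+\frac{1}{d+1}}\dint u=\Omega(K)$. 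You instead start from the add-one-point differentiation identity $\E D_t=\frac{\dint}{\dint t}\E\l_d(\Pi_t)$ (correct, by the Mecke/Margulis--Russo formula, since $\l_d([\eta])$ is bounded) and then run a Calka--Yukich-type localization and parabolic rescaling. Two points in your write-up deserve credit: the identity $\E\Delta(\Pi_t)=\int_t^\infty\E D_s\,\dint s$, which together with (\ref{eq:EN}) pins down $C_d=\tfrac{2}{d+1}c_1$ as a consistency check the paper does not record; and your correct refusal to obtain the theorem by differentiating B\'ar\'any's asymptotics \cite{Bar2}, since the monotone-density Tauberian argument needs eventual monotonicity of $\E D_t$, which is not available (pointwise submodularity of the hull volume indeed fails).

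The gap is that the decisive analytic content of your plan is only named, not carried out: the uniform-in-$z$ paraboloid approximation, the tail and moment bounds for the rescaled added-volume functional $\widehat V$, and the uniform integrability needed to interchange $t\to\infty$ with the integrations over $\partial K$ and over the rescaled height $\hat s$. This is not a wrong step, but it is precisely where all the difficulty of the theorem sits, and in your scheme it must be established for a functional of a random limiting polytope (a stabilization estimate in the sense of \cite{CaYu}). The paper's sandwich bounds and the Abelian theorem are its elementary, explicit substitute for that machinery: because the relevant probability is the closed-form exponential $e^{-t\l_d(K_+(F))}$, the uniformity over boundary points reduces to the deterministic two-sided estimates (\ref{schnitt})--(\ref{e2}), after which letting $\delta\to 0$ finishes the proof. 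Until you supply the corresponding moment and uniformity estimates for $\widehat V$, your argument establishes the exponent heuristically but does not prove the existence of the limit, let alone its value $C_d\,\Omega(K)$.
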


For $x \in K$ denote by $\cF (\eta, x)$ the set of facets of $\Pi_t$ which can be seen from $x$, 
i.e. which are facets of $\Pi_t$ but not of $[\Pi_t, x]= [\eta, x]$.
Note that this set is empty if $x \in \Pi_t$. 
Using this notation we have
$$
\l_d([\eta,x] \setminus [\eta])
=
\frac 1{d!}\sum_{(x_1, \dots, x_d) \in \eta_{\neq}^{d} } \1([x_1, \dots, x_d] \in \cF(\eta, x) ) \l_{d}[x_1, \dots, x_d,x] .
$$
The Slivnyak-Mecke formula (\ref{eq:SMF}) yields
\begin{eqnarray*}
\E D_t
&=&
\frac1{d!} \int \limits_K \E \sum_{(x_1, \dots, x_d) \in \eta_{\neq}^{d} } \1([x_1, \dots, x_d] \in \cF(\eta, x) ) \l_{d}[x_1, \dots, x_d,x] \,  \dint x 
\\&=&
\frac1{d!}  t^d \int \limits_K \dots \int \limits_K  \E \1(F \in \cF(\eta + \sum \d_{x_i}, x) ) \l_{d}[F,x] \, \dint x_1 \dots \dint x_d  \dint x,
\end{eqnarray*}
where $F= [x_1, \dots, x_d]$.
The affine hull of $F$ is a hyperplane which cuts $K$ into two parts. Denote by $K_+(F)$ that part of $K$ which contains $x$.
The indicator function equals one if the affine hull of $F$ separates $x$ from $\eta$, i.e. if $\eta (K_+)=0$. This happens with probability $e^{- t \l_{d}(K_+(F))}$.
The volume of the simplex $[F,x]$ equals $1/d$ times the base $\l_{d-1}(F)$ times the height, which is the distance $d_{{\rm aff} F}(x)$ of $x$ to the affine hull of $F$.
$$  \E D_t = \frac 1{d\, d!}  t^d  \int \limits_K \cdots \int \limits_K e^{-t\l_{d}(K_+(F))}   \l_{d-1}(F) d_{{\rm aff} F}(x)\,   dx_1 \cdots dx_d dx $$
The next lemma gives the asymptotic behavior of this integral and thus proves our theorem.

\begin{lemma}\label{LemmaAppendix}
Assume that $K \in {\mathcal K}^{2}_+$ with $ \l_{d}(K)=1$. Then 
\begin{eqnarray} \label{asymp}
\int \limits_K \cdots \int \limits_K e^{-t\l_{d}(K_+(F))}  \lefteqn{ \l_{d-1}(F) d_{{\rm aff} F}(x)\,   dx_1 \cdots dx_d dx }
&&
\\ & = & 
c_d   \Omega(K)   t^{-(d+1)-\frac 2 {d+1}} + o \left( t^{-d-1-\frac 2 {d+1}} \right)
\end{eqnarray}
as $t \to \infty$.
\end{lemma}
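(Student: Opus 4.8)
The plan is to reduce the $(d+1)$-fold spatial integral to a single integral over cap depths by an integral-geometric transformation, to localize the mass near $\partial K$, and to extract the curvature integral defining $\Omega(K)$. First I would apply the affine Blaschke--Petkantschin formula to the $d$ points $x_1,\dots,x_d$, whose affine hull is the hyperplane $H=\mathrm{aff}(F)$. Since $d$ points span a $(d-1)$-flat, the Jacobian of this transformation contributes the factor $\lambda_{d-1}(F)^{\,d-(d-1)}=\lambda_{d-1}(F)$, which combines with the factor already present in the integrand to give $\lambda_{d-1}(F)^2$. This rewrites the integral as
$$
c \int_{A(d,d-1)} \left( \int_{(H\cap K)^d} \lambda_{d-1}(F)^2 \, dx_1 \cdots dx_d \right)\left( \int_K e^{-t\lambda_d(K_+)} d_H(x)\, dx \right) dH,
$$
where $A(d,d-1)$ is the affine Grassmannian with its motion-invariant measure and $c=c_d$ is an explicit dimensional constant.

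Next I would disintegrate the inner double integral. The section factor $J(H)=\int_{(H\cap K)^d}\lambda_{d-1}(F)^2 \, dx_1\cdots dx_d$ depends only on the convex section $H\cap K$, while the $x$-integral splits over the two caps into which $H$ divides $K$, producing for each cap $C$ a factor $e^{-t\lambda_d(C)}\int_C d_H(x)\,dx$. The exponential weight forces $\lambda_d(C)=O(1/t)$, so only hyperplanes cutting off caps of vanishing depth contribute at leading order; the term attached to the large cap is exponentially negligible, as is the contribution of all hyperplanes bounded away from tangency to $\partial K$. I would then parametrize the relevant near-tangent hyperplanes by their outer normal $u\in S^{d-1}$ and cap width $w$, writing $dH=\sigma(du)\,dw$ and passing to the boundary through the inverse Gauss map, $\sigma(du)=\kappa(z)\,d\mathcal{H}^{d-1}(z)$, where $\kappa(z)$ is the Gaussian curvature at the boundary point $z$ carrying the cap.

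Using $K\in\mathcal{K}^2_+$, I would replace $K$ near $z$ by its osculating paraboloid $x_d=\tfrac12\sum_{i=1}^{d-1}\kappa_i(z)y_i^2$ and compute the scaling in $w$: a direct calculation gives $\lambda_d(C)\asymp w^{(d+1)/2}$, $\int_C d_H(x)\,dx\asymp w^{(d+3)/2}$, and $J(H)\asymp w^{(d-1)(d+2)/2}$, with prefactors that, after diagonalizing the second fundamental form, reduce to powers of $\kappa(z)$ times pure dimensional constants. Substituting $s=w^{(d+1)/2}$ collapses the $w$-integral to a Gamma integral: the exponents of $w$ add up to $(d-1)(d+2)/2+(d+3)/2=(d+1)^2/2$, which produces exactly the power $t^{-(d+1)-2/(d+1)}$, while bookkeeping of the curvature factors leaves the weight $\kappa(z)^{1/(d+1)}$. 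Integrating over $\partial K$ recovers the affine surface area $\Omega(K)=\int_{\partial K}\kappa(z)^{1/(d+1)}\,d\mathcal{H}^{d-1}(z)$ and the asserted constant $c_d\,\Omega(K)$.

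The \emph{main obstacle} I expect is the rigorous error control in the localization and the paraboloid approximation, rather than the formal scaling. One must show, uniformly in $z\in\partial K$, that the $o(|y|^2)$ deviation of $\partial K$ from its osculating paraboloid, the variation of $\kappa$ across a single cap, and the contribution of hyperplanes that are not near-tangent all affect only lower-order terms. The positive-curvature hypothesis is what makes this tractable: it forces caps of a given volume to be uniformly comparable to ellipsoids (via Macbeath-region and economic-cap-covering estimates), so that the rescaled integrand admits a dominating function uniform in $t$ and the limit can be moved inside the integral by dominated convergence. Establishing these uniform bounds is the technical heart of the argument.
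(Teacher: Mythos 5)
Your proposal is correct and follows essentially the same route as the paper: the affine Blaschke--Petkantschin transformation producing $\lambda_{d-1}(F)^2$, localization to near-tangent hyperplanes, the osculating-paraboloid scaling giving the exponents $\frac{(d-1)(d+2)}{2}$, $\frac{d+3}{2}$, $\frac{d+1}{2}$, the Laplace/Gamma asymptotics yielding $t^{-(d+1)-\frac{2}{d+1}}$, and the Gauss-map conversion of the curvature weight $\kappa^{-d/(d+1)}$ into the affine surface area. The only real difference is in the uniformity argument: where you invoke Macbeath-region/cap-covering estimates and dominated convergence, the paper sandwiches $\partial K$ between $(1+\delta)^{\pm 1}$-scaled paraboloids uniformly in the normal direction and lets $\delta\to 0$ at the end, which is a somewhat more elementary way to achieve the same error control.
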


Principal ideas for the proof of this lemma are taken from \cite{Re6}, where the asymptotics of a similar integral was computed.

\begin{proof}
In a first step we transform the integral using the Blaschke--Petkantschin formula (cf., e.g., \cite[p.278]{SchnWe3}),
\begin{eqnarray*}\label{blaschke}
\int \limits_K   \lefteqn{ \cdots  \int \limits_K  f(x_1, \dots, x_d)  \dint x_1 \cdots \dint  x_d }&&
\\ &=&  
(d-1)!  \int \limits_{H \in {\mathcal H} (d,d-1)}  \int \limits_{K\cap H}  \cdots  \int \limits_{K\cap H}
f(x_1, \dots, x_d)  \l_{d-1} ( F)   \dint x_1  \cdots \dint x_d   \dint H .
\end{eqnarray*}
The differential $dH$ corresponds to the suitably normalized rigid motion invariant Haar measure on the
Grassmannian ${\mathcal H} (d,d-1)$ of hyperplanes in $\R^d$. A hyperplane is given by its unit
normal vector $u \in S^{d-1}$ and its signed distance $h$ to the origin, $H= \{ y:\ \langle y,u\rangle =h\}$. Let $H_+ =\{ y:\  \langle y,u \rangle \geq h\}$ be the corresponding halfspace. Denoting by $du$ the element of surface area on $S^{d-1}$, we have $dH = \frac 12 dh du$, $u \in S^{d-1}, h \in \R$. (Observe that $H(h,u)=H(-h,-u)$, which explains the factor $\frac 12$.)

Because of $\l_{d-1} (F)$ the integrand vanishes outside the interval 
$$h \in [- h_K(-u),h_K (u) ], $$
where $h_K (u)$ is the support function of $K$ in direction $u$. Given $H=H(h,u)$, we assume that the additional point $x \in H_+$. Then $K_+(F)= K \cap H_+$ and $\l_+= \l_{d}(K_+)$ only depends on $H_+$ but not on the relative position of the points $x_j \in H$. 
This yields
\begin{eqnarray}\nonumber
 \int \limits_K \cdots  \lefteqn{\int \limits_K e^{-t\l_{d}(K_+(F))}  \l_{d-1}(F) \dint_{{\rm aff} F}(x)\,   \dint x_1 \cdots \dint x_d \dint x }&&
\\ & =& \label{eq:inth}
\frac {(d-1)!}2
\int \limits_{S^{d-1} } \int \limits_{- h_K(-u)}^{h_K (u)}  e^{-t \l_+}  {\mathcal I}_{K \cap H} 
{\mathcal J}_{K \cap H_+} \, \dint h \dint u
\end{eqnarray}
with
$$ {\mathcal I}_{K \cap H}= \int \limits_{K\cap H} \cdots \int \limits_{K\cap H}  \l_{d-1}( F)^2 \
 \dint  x_1  \cdots \dint x_d , \ \ 
 {\mathcal J}_{K \cap H_+} = 
\int \limits_{K_+} \dint_{H}(x) \, \dint x  .
 $$

Given some $\e>0$, we split the integral in (\ref{eq:inth}) with respect to $h$ into two parts: $h \in [-h_K(-u), h_K(u)- \e]$ and $h \in [h_K(u)-\e, h_K(u)]$.
Estimating the integral
$$ \int \limits_{-h_K(-u)}^{h_K (u)-\e}   e^{-t\l_+}   {\mathcal I}_{K \cap H} {\mathcal J}_{K \cap H_+} \ \dint h $$
is easy. The integrals ${\mathcal I}_{K \cap H}$ and ${\mathcal J}_{K \cap H_+} $ are always bounded by a constant $\gamma_1$ independent of $h$ and $u$. There exists a constant $\gamma_2 = \gamma_2 (\d) >0$ independent of $u$ with $\l_+ =\l_+(h,u) \geq \gamma_2$. And $h_K(u)+h_K(-u) $ is bounded by some constant $\gamma_3$ independent of $u$. Thus for $h \leq h_K(u)-\e$ we have
\begin{equation} \label{gamma}
0 \leq \int \limits_{- h_K(-u)}^{h_K (u)-\e}   e^{-t\l_+}  {\mathcal I}_{K \cap H} {\mathcal J}_{K \cap H_+} \ \dint h \leq \gamma_1^2 \gamma_3 e^{-t\gamma_2} .
\end{equation}

We estimate the second part of the integral. Let $u \in S^{d-1} $ be fixed. As $K$ is of class ${\mathcal K}^2_+$, there is an unique point $p \in \partial K$ with outer normal vector $u$. Choose $\d >0$ sufficiently small. There exists a paraboloid $q^{(p)}(y)$ and a $\l =\l(\d) >0$ such that the $\l$-neighborhood of $p$ in $\partial K$ can be represented by a convex function $f^{(p)}(y)$ fulfilling 
\begin{equation} \label{eq:q}
((1+\d)^{-1}  q^{(p)}(y) +p)  \leq f^{(p)} (y) \leq ((1+\d)  q^{(p)}(y) +p) . \end{equation}
Now we fix $\e>0$ such that for each $u$ the intersection $H(h_K(u)- \e, u) \cap \partial K$ is contained in this $\l$-neighborhood of the boundary point $p$.

Let $\R^d = \{(y,z) \vert y \in \R^{d-1},   z \in \R \} $.
For the moment identify the tangent hyperplane to $\partial K$ at $p$ with the plane $z =0$ and $p$ with the origin such that
$K$ is contained in the halfspace $z \geq 0$ and $u$ coincides with $(0,-1)$. Hence, in this situation $h_K(u)=0$. Define $H(z)=H(-h, u)$ to be the hyperplane parallel to $z=0$ with distance $z$ to the origin, and in accordance with the definition above, $H_+(z)$ to be the corresponding halfspace containing the new origin.

We introduce polar coordinates: let $\R^d = (\R^+ \times S^{d-2}) \times \R$ and denote by $(r v,z)$ a point in $\R^d$,
$r \in \R^+$, $v \in S^{d-2}$, $z \in \R$.
Since $K \in {\mathcal K}^2_+$, by choosing a suitable Cartesian coordinate system in $\R^{d-1}$, the paraboloid can be  parametrized by
$$ b_2 (rv)=\tfrac 12 ( k_1 \langle rv,e_1 \rangle ^2+ \cdots +k_{d-1} \langle rv,e_{d-1} \rangle ^2), $$
where $k_1, \dots, k_{d-1}$ are the principal curvatures of $K$ at $p$.
The estimate (\ref{eq:q}) reads as 
$$ (1+\d)^{-1}  b_2(v) r^2  \leq z=f(r v) \leq (1+\d)  b_2(v) r^2,  $$
which implies
\begin{equation}\label{r}   (1+\d)^{-{\frac 1 2}}  b_2(v)^{-{\frac 1  2}} z^{\frac 1  2} \leq r=r(v,z)
\leq (1+\d)^{\frac 1  2}  b_2(v)^{-{\frac 1  2}} z^{\frac 1  2},  \end{equation}
where $r$ is the radial function of $K \cap H(z) $.
From this we obtain estimates for the $(d-1)$-dimensional volume of $K \cap H(z)$
\begin{equation}\label{schnitt}
(1+\d)^{-\frac {d-1}{ 2}} c_1   \kappa (u)^{-{\frac 1  2}} z^{\frac{d-1 }{ 2}} \leq
\displaystyle \l_{d-1} (K \cap H(z))
\leq (1+\d)^{\frac{d-1 }{ 2}} c_1   \kappa (u)^{-{\frac 1  2}} z^{\frac {d-1 }{ 2}}
\end{equation}
with a suitable constant $c_1 >0$, where $\kappa (u) = \prod k_i$ is the Gaussian curvature of $K$ at $p$. 
By definition
\begin{equation}\label{v-int}
  \l_+(z)= \int_0^{z} \l_{d-1} (K \cap H(t))  \dint t,
\end{equation}
which by (\ref{schnitt}) implies
\begin{equation} 
(1+\d)^{-\frac{d-1 }{ 2}} {\frac{2 }{ d+1}}   c_1   \kappa (u)^{-{\frac 1 2}} z^{\frac{d+1 }{ 2}} \leq
\l_+ (z)
\leq (1+\d)^{\frac{d-1 }{ 2}} {\frac{2 }{ d+1}}   c_1   \kappa (u)^{-{\frac 1 2}} z^{\frac{d+1 }{ 2}} .
\end{equation}
For given $z$, (\ref{r}) shows that $K \cap H(z)$ contains an ellipsoid ${\mathcal E}_-$ defined \linebreak by $(1+\d)^{-1}  b_2(v) r^2  =z$,
resp., is contained in an ellipsoid ${\mathcal E}_+$ defined by $(1+\d) b_2(v) r^2  =z$.
We are interested in 
$${\mathcal I}_{K \cap H(z)} = \int_{K \cap H(z)} \cdots \int_{K \cap H(z)}  \l_{d-1}( F)^2 \,  d x_1 \cdots \dint x_d 
.$$
Clearly, if the range of integration is increased, resp., decreased, ${\mathcal I}$ will increase, resp., decrease.
$${\mathcal I}_{{\mathcal E}_-} \leq {\mathcal I}_{K \cap H(z)} \leq {\mathcal I}_{{\mathcal E}_+} $$
Note that these integrals are invariant under volume--preserving affinities. Thus, ${\mathcal I}_{{\mathcal E}_\pm}$ does not depend on the shape of the ellipsoids and is proportional to $\l_{d-1} ({\mathcal E}_{\pm})^{d+2}$.
Hence, there exists a suitable constant  $c_2$ for which
\begin{eqnarray*}\label{e}   
(1+\d)^{-{\frac{(d-1)(d+2)}  2}} \lefteqn{ c_2   \kappa (u)^{-\frac{d+2 }{ 2}} z^{\frac{(d-1)(d+2) }{ 2}} }&&
\\ &\leq&
{\mathcal I}_{K \cap H(z)}
\leq (1+\d)^{\frac{(d-1)(d+2) } 2} c_2   \kappa (u)^{-\frac{d+2 }{ 2}} z^{\frac{(d-1)(d+2) }2} .  
\end{eqnarray*}
In the last step we estimate 
$$
{\mathcal J}_{K\cap H_+(z)} = \int_{K_+(z)} \dint _H(x) \, \dint x 
=
\int_0^z \l_{d-1}(K \cap H(t)) (z-t) \dint t.
$$
By the same monotonicity argument used above we obtain
\begin{equation}\label{e2}   
(1+\d)^{-\frac{d-1} 2} c_3  \kappa (u)^{-{\frac 1  2}} z^{\frac{d+3 } 2} \leq
{\mathcal J}_{K \cap H_+(z)}
\leq 
(1+\d)^{\frac{d-1} 2} c_3 \kappa (u)^{-{\frac 1 2}} z^{\frac{d+3 }2} 
.  \end{equation}

Now we are ready to estimate the integral
$$ \int_{h_K (u)-\e}^{h_K (u)}   e^{-t\l_+} {\mathcal I}_{K \cap H} {\mathcal J}_{K \cap H_+} \, \dint h   
= 
\int_{0}^{\e}   e^{-t\l_+(z)} {\mathcal I}_{K \cap H(z)} {\mathcal J}_{K \cap H_+(z)} \, \dint z.$$
Note that (\ref{v-int}) is equivalent to
$$ \frac{d }{ dz} \l_+(z)= -\l_{d-1} (K \cap H(z)) ,$$
and substituting $v=\l_+(z)$ implies
\begin{eqnarray*}
&&\int_0^{\e}  e^{-t \l_+(z)}  {\mathcal I}_{K \cap H(z)} {\mathcal J}_{K \cap H_+(z)}\, \dint z \\
&&\qquad =
\int_0^{\l_+(\e)} e^{-t v}  {\mathcal I}_{K \cap H(z(v))}  {\mathcal J}_{K \cap H_+(z(v))}   \l_{d-1}(K \cap H(z(v)))^{-1}\, \dint v ,
\end{eqnarray*}
where $H(z(v))$ denotes the hyperplane parallel to $z=0$ cutting off from $K$ a cap of volume $v$.

Combining this with (\ref{schnitt}) - (\ref{e2})  yields
\begin{eqnarray*}
c_4 \lefteqn{(1+\d)^{-\frac {(d-1)(d^2 +3d+3)}{(d+1)} }
 \kappa (u)^{-\frac d{d+1}}  
 \int_0^{\l_+(\e)} e^{-tv} v^{ \frac{d^2 +d+2}{d+1} }  \dint v } &&
\\ &\leq&
\int_{h_K (u)-\e}^{h_K (u)}  e^{-t v}  {\mathcal I}_{K \cap H(z)} {\mathcal J}_{K \cap H_+(z)}\ \dint h 
\\ & \leq  & 
c_4 (1+\d)^{\frac {(d-1)(d^2 +3d+3)}{(d+1)} }
\kappa (u)^{-\frac d{ d+1}}  
 \int_0^{\l_+(\e)} e^{-tv}
v^{\frac{d^2 +d+2}{d+1}}  \dint v
\end{eqnarray*}
with a suitable constant $c_4$.
Hence, we are interested in the asymptotic behavior of the Laplace transform 
$$ 
\int_0^{\l_+(\e)} e^{-tv}
v^{\frac{d^2 +d +2} {d+1}}  \dint v 
= {\mathcal L} \left( v^{\frac{d^2 +d +2} {d+1}} \right) (t) \\
+ O \left( (1-\gamma_2)^{t} \right)
$$
as $t \to \infty$. (Recall that $\l_+ (\e) \geq \gamma_2$.) By an Abelian theorem, cf., e.g., Doetsch \cite{Doe}, chap.~3, \S~1, we obtain
$$ 
{\mathcal L} \left \lbrace  v^{\alpha}  \right \rbrace (t)
=
\Gamma \left(\alpha +1 \right) t^{-\alpha-1}
+ O \left( t^{-\alpha-2}\right) \, $$
as $t \to \infty$. This implies the following bounds
\begin{eqnarray*}
c_5 \lefteqn{(1+\d)^{-\frac {(d-1)(d^2 +3d+3)}{(d+1)} }
 \kappa (u)^{-\frac d{ d+1}} 
t^{-(d+1)-\frac{2}{d+1}} (1+ O \left( t^{-1}\right)) }&&
\\ &\leq&
\int_{h_K (u)-\e}^{h_K (u)}  e^{-t v}  {\mathcal I}_{K \cap H(z)} {\mathcal J}_{K \cap H_+(z)}\ \dint h 
\\ & \leq  & 
c_5 (1+\d)^{\frac {(d-1)(d^2 +3d+3)}{(d+1)} }
 \kappa (u)^{-\frac d{ d+1}} 
t^{-(d+1)-\frac{2}{d+1}} (1+ O \left( t^{-1}\right)) 
\end{eqnarray*}
as $t \to \infty$, where the constants in $O(\cdot)$ and the constant $c_5$ are independent \linebreak of $p $ and $ u$.

Concerning the remaining integration note that the term
$$\int_{S^{d-1}}  \kappa(u)^{-1 + \frac 1 {d+1}} \dint u = \int_{\partial K}  \kappa(x)^{\frac 1 {d+1}} \dint x$$
is the affine surface area $\Omega( K)$. Since the terms in (\ref{gamma}) are of smaller order, we finally obtain
\begin{eqnarray*}
c_6 \lefteqn{(1+\d)^{-\frac {(d-1)(d^2 +3d+3)}{(d+1)} } \O(K)
t^{-(d+1)-\frac{2}{d+1}} (1+ O \left( t^{-1}\right)) }&&
\\ &\leq&
\int \limits_K \cdots \int \limits_K e^{-t\l_{d}(K_+(F))}  \lefteqn{ \l_{d-1}(F) d_{{\rm aff} F}(x)\,   dx_1 \cdots dx_d dx }
\\ & \leq  & 
c_6 (1+\d)^{\frac {(d-1)(d^2 +3d+3)}{(d+1)} } \O(K) 
t^{-(d+1)-\frac{2}{d+1}} (1+ O \left( t^{-1}\right)) 
\end{eqnarray*}
as $t \to \infty$ with a suitable constant $c_6$. Since this holds for each $\d >0$,  the proof is finished. \qed
\end{proof}

\noindent
\begin{footnotesize}
{\bf Acknowledgements.} 
M. Beermann was supported in part by the FWF project P 22388-N13, \lq Minkowski valuations and geometric inequalities\rq . We are grateful to an anonymous referee for careful reading of the manuscript and numerous helpful suggestions.
The final publication is available at Springer via  http://dx.doi.org/10.1007/s00454-014-9649-7.
\end{footnotesize}

\end{document}